\documentclass[11pt]{amsart}

\usepackage[T1]{fontenc}
\usepackage[utf8]{inputenc}
\usepackage{lmodern}
\usepackage{microtype}
\usepackage{amsmath,amssymb,amsthm}
\usepackage{tikz}
\usepackage{pgfplots}
\usetikzlibrary{arrows.meta}
\pgfplotsset{compat=1.18}
\usepackage[colorlinks=true,linkcolor=blue,citecolor=blue,urlcolor=blue]{hyperref}
\usepackage[dvipsnames,svgnames]{xcolor}
\usepackage{mathtools}

\usepackage[foot]{amsaddr}

\usepackage[size=footnotesize]{todonotes}
\usepackage[capitalise,noabbrev]{cleveref}
\usepackage{autonum}

\definecolor{kupkablue}{RGB}{38,99,151}
\definecolor{kupkaorange}{RGB}{211,113,41}
\definecolor{kupkagray}{RGB}{92,101,110}
\tikzset{every picture/.style={line cap=round,line join=round}}

\newtheorem{theorem}{Theorem}[section]
\newtheorem{proposition}[theorem]{Proposition}
\newtheorem{lemma}[theorem]{Lemma}
\newtheorem{corollary}[theorem]{Corollary}
\numberwithin{equation}{section}

\theoremstyle{remark}
\newtheorem{remark}[theorem]{Remark}

\newcommand{\R}{\mathbb R}
\newcommand{\N}{\mathbb N}
\newcommand{\dd}{\,\mathrm d}
\newcommand{\Span}{\operatorname{span}}
\newcommand{\Abn}{\operatorname{Abn}}
\newcommand{\End}{\operatorname{End}}

\title[A sub-Riemannian endpoint map with no regular values]
{A sub-Riemannian endpoint map with no regular values}
\author{Antonio Lerario$^1$}
\author{Luca Rizzi$^1$}
\address{$^1$SISSA, via Bonomea 265, 34136 Trieste, Italy}
\email{lerario@sissa.it}
\email{lrizzi@sissa.it}
\author{Daniele Tiberio$^2$}
\address{$^2$Dipartimento di Matematica ``Tullio Levi-Civita'',
Universit\`a degli Studi di Padova, via Trieste 63,
35121 Padova, Italy}
\email{daniele.tiberio@unipd.it}
\date{\today}

\begin{document}

\begin{abstract}
We construct an endpoint map with no regular values, disproving the sub-Riemannian Sard conjecture. Remarkably, in our construction the minimizing Sard property remains valid.
 
\end{abstract}

\maketitle

\section{Introduction}

Let $M$ be a smooth, connected manifold, let $\Delta\subset TM$ be a smooth,
constant-rank, bracket-generating distribution, and fix a point
$q_0\in M$.  Let $\Omega_{q_0}$ denote the Hilbert manifold of
$W^{1,2}$ horizontal curves starting from $q_0$,
\begin{equation}
 \Omega_{q_0}\coloneq 
 \left\{\gamma\in W^{1,2}([0,1],M):
 \gamma(0)=q_0,\ \dot\gamma(t)\in\Delta_{\gamma(t)}
 \text{ for a.e. }t\right\}.
\end{equation}
Here $W^{1,2}$ may be defined using any auxiliary Riemannian metric on
$M$.  The endpoint map is the surjective, smooth map
\begin{equation}
 \mathcal E_{q_0}:\Omega_{q_0}\longrightarrow M,
 \qquad
 \mathcal E_{q_0}(\gamma)=\gamma(1).
\end{equation}
A horizontal curve $\gamma\in\Omega_{q_0}$ is \emph{singular} if
$D_\gamma\mathcal E_{q_0}$ is not surjective, and is then called
\emph{abnormal}.  We write
\begin{equation}
 \Abn_\Delta(q_0)
 \coloneq \{\mathcal E_{q_0}(\gamma):\gamma\in\Omega_{q_0}
 \text{ is singular}\}.
\end{equation}
When a global frame of $\Delta$ is available, the endpoint map can equivalently be
written in terms of the so-called \emph{control} of horizontal curves, and viewed as a function on $L^2$. This is the case for the
structure constructed below.

The Sard conjecture for endpoint maps, attributed to Zhitomirskii and
Montgomery, postulates that $\Abn_\Delta(q_0)$ has measure zero for every
smooth bracket-generating distribution and every base point; see
\cite[Section~10.2]{Montgomery}.  It was also explicitly formulated in
\cite[Section~4.3]{RiffordTrelat}.  Agrachev isolated a much weaker
question in \cite[Problem~III]{AgrachevOpen}: must an endpoint map have
at least one regular value?  Equivalently, can the singular curves
issuing from one point fill the whole manifold?  Recall that $q\in M$
is a regular value of $\mathcal E_{q_0}$ when every curve in
$\mathcal E_{q_0}^{-1}(q)$ is a regular point.  Thus
$\Abn_\Delta(q_0)=M$ means that the endpoint map has no regular values.

Our main result gives failure of
both properties, disproving the Sard conjecture of Zhitomirskii and
Montgomery and answering negatively Agrachev's regular-value question.

\begin{theorem}\label{thm:main}
There exist a smooth bracket-generating distribution $\Delta$ on
$\R^4$, of rank two and
step five, and a point $q_0\in\R^4$, such that:
\begin{enumerate}
 \item The endpoint map $\mathcal E_{q_0}$ has no regular
 values, equivalently
 \begin{equation}
  \Abn_\Delta(q_0)=\R^4.
 \end{equation}
 \item There exists a compact set $ \mathcal K\subset\Omega_{q_0}$, consisting entirely of singular curves, such that
 $\mathcal E_{q_0}(\mathcal K)$ contains a nonempty open subset of
 $\R^4$.  In particular, for any sub-Riemannian metric $g$ on $\Delta$, a nonempty open subset of
 $\Abn_\Delta(q_0)$ is reached by singular curves of uniformly bounded
 sub-Riemannian length.
\end{enumerate}
\end{theorem}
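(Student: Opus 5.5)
The plan is to build the counterexample explicitly on $\R^4$ with coordinates $(x_1,x_2,x_3,x_4)$. I would take a rank-two distribution spanned by
\begin{equation}
X_1=\partial_{x_1},\qquad X_2=\partial_{x_2}+\phi(x_1,x_2)\,\partial_{x_3}+\psi(x_1,x_2,x_3)\,\partial_{x_4},
\end{equation}
with polynomial $\phi,\psi$. This is a Martinet/Engel-type tower, and I would choose $\phi,\psi$ so that some hypersurface carries a characteristic line field. The mechanism I would exploit is the following. In rank two, abnormal curves are the characteristics of $\Delta^\perp$ restricted to the annihilator of $[\Delta,\Delta]$. In dimension four with growth $(2,3,4)$ these form a line field everywhere, the Engel line field. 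I would deliberately break the Engel condition along a hypersurface. There $\dim(\Delta+[\Delta,\Delta]+[\Delta,[\Delta,\Delta]])$ drops, while bracket generation is restored only at step five.

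\textbf{Step 1.} Fix $\phi,\psi$ and verify rank two, bracket generation and step five by computing iterated brackets of $X_1,X_2$. This is routine.

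\textbf{Step 2.} Identify a family of abnormal curves through $q_0$. I would use two kinds of curves: straight lines of $X_1$, which are abnormal because they lie in a Martinet-type surface, and integral curves of the Engel line field where it is defined. The point is that concatenating an abnormal arc with any further abnormal arc yields a singular curve. Indeed, the annihilating covector $\lambda(1)\neq0$ propagates backwards along the abnormal piece, and the concatenation is singular as soon as the last piece is abnormal with a nonzero covector at its end. More usefully, a curve is singular whenever its final segment is abnormal in a sense that can be transported backward.

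So I would show the following. For every $q\in\R^4$ there is an abnormal segment ending at $q$ whose initial point is reachable from $q_0$ by some horizontal curve $\eta$. Then the concatenation $\eta\ast\sigma$ is singular, because the image of $D\mathcal E$ at $\eta\ast\sigma$ equals $P_\sigma(\operatorname{Im}D\mathcal E_\eta)+\operatorname{Im}D\sigma$-variations. The second summand is annihilated by the abnormal covector; the first is not. So I need a stronger condition: the abnormal extremal along $\sigma$ must be strictly abnormal in the sense that its lift can be extended to the whole $\eta$.

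The cleanest route is therefore to construct $\Delta$ so that a single smooth abnormal foliation covers $\R^4$ backward from $q_0$. Concretely, I want a hypersurface-free abnormal flow whose characteristic covector $\lambda$ satisfies $\lambda\perp\Delta$ and $\lambda\perp[\Delta,\Delta]$ along every horizontal curve from $q_0$ lying in a chosen three-dimensional family. Then every endpoint lies on such a curve.

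\textbf{Step 3.} Parametrize singular curves by finitely many parameters and show that the resulting map $\Phi:\R^k\to\R^4$ is onto. Part (1) follows directly from this. For part (2), I would restrict to a compact parameter box on which $\Phi$ is a submersion somewhere. By the implicit function theorem the image contains an open set, and $\mathcal K$ is the continuous image of the box in $\Omega_{q_0}$, hence compact. Uniformly bounded length then follows from compactness of $\mathcal K$ in $W^{1,2}$, since the $L^2$ norms of the controls are bounded and so is length for any metric $g$ on the compact region.

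The main obstacle is Step 2: designing $\phi,\psi$ so that the abnormal covector extends globally along the concatenated curves, giving a genuinely four-parameter (or at least surjective) family of singular curves from a single point. I would first try making $\lambda=dx_4-c\,dx_3$ invariant under the flow of $X_1$, with $X_2$-motion constrained to a cone, and check the Goh condition directly.
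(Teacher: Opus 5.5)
\textbf{The decisive problem is Step~3, not Step~2.} Take any smooth finite-parameter family $p\mapsto\gamma_p\in\Omega_{q_0}$, $p\in\R^k$, of singular curves. Examples are flow lines of a characteristic line field with the time as one of the parameters, or concatenations of pieces depending smoothly on $p$. Set $\Phi(p)=\mathcal E_{q_0}(\gamma_p)$. Then $D\Phi(p)=D_{\gamma_p}\mathcal E_{q_0}\circ D_p\gamma$. Hence $\operatorname{Im}D\Phi(p)\subset\operatorname{Im}D_{\gamma_p}\mathcal E_{q_0}$, which is a proper subspace of $\R^4$ because $\gamma_p$ is singular. So $\Phi$ is a submersion nowhere and every value of $\Phi$ is critical. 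The classical finite-dimensional Sard theorem then forces $\Phi(\R^k)$ to have measure zero.

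This rules out the surjectivity you want for (1) and the implicit-function argument you want for (2). Your ``single smooth abnormal foliation covering $\R^4$'' (a three-parameter family of curves plus time) fails for the same reason. A counterexample must therefore use a family of singular curves that is \emph{not} smooth and finite-dimensional.

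\textbf{The missing idea is Kupka's mechanism.} The paper parametrizes its curves by the compact, totally disconnected space $E\times E\times\overline U$, with $E=\{0,1\}^{\N}$, and the controls depend on these parameters only continuously. This is realized by infinitely many disjoint bumps $a_n$ accumulating at $x=0$ and $x=1$, inside a flat $C^\infty$ coefficient $a$. The $n$-th bump of the system centred at $0$, when selected, contributes exactly $2^{-n}e_1$ of vertical displacement; the $n$-th bump centred at $1$ contributes exactly $2^{-n}e_2$. The open set of endpoints then comes from the explicit formula $\End_0(\mathsf u_{\varepsilon,\delta,x,y})=(x,y,b(\varepsilon),b(\delta))$ and the surjectivity of the binary expansion $b$, not from any submersion. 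Your polynomial ansatz for $\phi,\psi$ cannot contain such flat accumulating bumps, and the paper leaves the real-analytic case open.

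\textbf{Step~2 also lacks the device that makes concatenation work.} You correctly note that appending an abnormal arc to an arbitrary horizontal curve $\eta$ need not give a singular curve. However, you give no way to produce a covector that works along $\eta$ as well. In the paper, the preliminary loops lie in the zero set of $F=\partial_yV$, where $[Y,X]=0$. The identity in the proof of \cref{lem:abnormal} then gives $\operatorname{Im}D\mathcal E=\Delta$ at their endpoint (corank two), so these loops are singular for every multiplier simultaneously.

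Consequently, whatever abnormal covector the final segment carries, transported back to the junction it annihilates $\Delta$, and hence annihilates the image contributed by the loops. Concretely, $\lambda_*=(x_*,x_*-1)$ satisfies $\lambda_*\cdot F=0$ along the whole concatenation, and \cref{lem:abnormal} applies.

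Without an explicit $\phi,\psi$ that produces such maximally singular curves together with a Kupka-type binary selection, your Steps~1--2 remain unverified, and Step~3 is contradicted by Sard's theorem.
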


The dimension and corank in the theorem are the first possible ones
for a rank-two counterexample.  Indeed, for a bracket-generating
rank-two distribution on a three-dimensional manifold, every
nonconstant singular horizontal curve is contained in the Martinet
surface, which has zero Lebesgue measure
\cite[Section~1 and Appendix~A]{BdSR}. The next case is rank
two and corank two, which is precisely the setting of
\cref{thm:main}.

The distribution constructed in this paper is neither equiregular nor real-analytic.  Therefore, it remains interesting to understand whether the conjecture is true for the corresponding special classes of structures, and in particular for nonholonomic left-invariant distributions on Carnot groups.

Outside a set of endpoints of measure zero, the particular singular
curves produced in the proof have corners and are therefore not length
minimizing. Moreover, the resulting sub-Riemannian structure satisfies
the minimizing Sard conjecture by a theorem of Rifford; see
\cref{sec:minimizing}.

\subsection*{Idea of the construction}

One well-known counterexample to the classical Morse--Sard theorem for
functions on infinite-dimensional spaces is due to Kupka. He constructed
a smooth function on $\ell^2$ having a compact binary cube of critical
points whose critical values fill an interval \cite{Kupka}; see also
\cite[Section~4.3]{LRT}, where we studied this mechanism in greater
detail. Our construction has two ingredients. The first realizes
Kupka's mechanism within the endpoint map of a distribution and
produces arbitrary vertical displacements by means of singular loops.
The second moves the base endpoint without changing the vertical
coordinates.

\begin{figure}[t]
\centering
\begin{tikzpicture}[x=.78cm,y=.95cm,font=\small]
  \begin{scope}
    \node[font=\footnotesize\bfseries] at (3.20,2.15) {Step 1: $z$ displacement};

    \draw[-{Stealth[length=1.8mm]}] (0,0) -- (2.95,0) node[below] {$x$};
    \draw[-{Stealth[length=1.8mm]}] (.45,0) -- (.45,1.55) node[left] {$y$};
    \draw[kupkablue,thick]
      (0.08,0) .. controls (0.13,.45) and (0.21,.45) .. (0.26,0)
      (0.30,0) .. controls (0.33,.30) and (0.38,.30) .. (0.41,0)
      (0.49,0) .. controls (0.52,.30) and (0.57,.30) .. (0.60,0)
      (0.64,0) .. controls (0.69,.45) and (0.77,.45) .. (0.82,0);
    \draw[kupkaorange,thick]
      (1.48,0) .. controls (1.53,.45) and (1.61,.45) .. (1.66,0)
      (1.70,0) .. controls (1.73,.30) and (1.78,.30) .. (1.81,0)
      (1.89,0) .. controls (1.92,.30) and (1.97,.30) .. (2.00,0)
      (2.04,0) .. controls (2.09,.45) and (2.17,.45) .. (2.22,0);
    \draw[kupkagray,thick] (0,0) -- (2.62,0) -- (0,0);
    \node[below] at (.45,0) {$0$};
    \node[below] at (1.85,0) {$1$};
    \node[text=kupkablue,above] at (.45,.49) {$a_0$};
    \node[text=kupkaorange,above] at (1.85,.49) {$a_0(\,\cdot-1)$};

    \draw[-{Stealth[length=1.8mm]}] (3.10,0.70) -- (3.75,0.70);

    \draw[-{Stealth[length=1.8mm]}] (4.00,0) -- (6.85,0) node[below] {$z_1$};
    \draw[-{Stealth[length=1.8mm]}] (4.00,0) -- (4.00,1.62) node[left] {$z_2$};
    \filldraw[fill=kupkagray!10,draw=kupkagray,thick]
      (4.00,0) -- (5.35,0) -- (5.35,1.10) -- (4.00,1.10) -- cycle;
    \draw[kupkablue,very thick,-{Stealth[length=1.8mm]}]
      (4.00,0) -- (5.35,0);
    \draw[kupkaorange,very thick,-{Stealth[length=1.8mm]}]
      (4.00,0) -- (4.00,1.10);
    \node[text=kupkablue,below=2pt] at (5.35,0) {$e_1$};
    \node[text=kupkaorange,left=2pt] at (4.00,1.10) {$e_2$};
  \end{scope}

  \begin{scope}[shift={(8.15,0)}]
    \node[font=\footnotesize\bfseries] at (2.55,2.15) {Step 2: $xy$ displacement};

    \draw[-{Stealth[length=1.8mm]}] (0,0) -- (5.20,0) node[below] {$x$};
    \draw[-{Stealth[length=1.8mm]}] (0,0) -- (0,1.70) node[left] {$y$};

    \fill (0,0) circle (1.2pt);
    \fill (3.20,0) circle (1.2pt);
     \draw[kupkablue,very thick,-{Stealth[length=1.8mm]}]
      (0,0) -- (3.20,0);
    \draw[kupkablue,very thick,-{Stealth[length=2mm]}]
      (3.20,0) -- (3.20,1.38);
    \node[above] at (1.55,0.03) {$y=0$};
    \node[right] at (3.24,.66) {$x=x_*$};
    \fill (3.20,1.38) circle (1.4pt)
      node[above right=-1pt] {$(x_*,y_*)$};
  \end{scope}
\end{tikzpicture}
\vspace{-0.6em}
\caption{Schematic illustration of the proof strategy.}
\label{fig:construction-idea}
\end{figure}

We work on $\R^2_{x,y}\times\R^2_z$ with the graph distribution generated
by
\begin{equation}
X=\partial_x-V(x,y)\cdot\partial_z,
\qquad
Y=\partial_y .
\end{equation}
If $(x(t),y(t))$ is the projection of a horizontal curve, its lift
satisfies
\begin{equation}
\dot z(t)=-\dot x(t)V(x(t),y(t)).
\end{equation}
Thus the vertical displacement of a horizontal lift is determined by
$V$. The function $F\coloneq\partial_yV$ 
controls the first-order variation of this displacement when the base
path is varied in the $y$-direction. The criterion proved in
\cref{lem:abnormal} states that if a fixed nonzero multiplier
$\lambda\in\R^2$ satisfies $\lambda\cdot F=0$ along a base path, then
its horizontal lift is singular.

Let $a$ be a smooth compactly supported function, whose specific
construction will be described shortly, and set
\begin{equation}
V(x,y) = \left(\frac{y^3}{3}-\frac{a(x)y^2}{2}\right)w(x), \quad \text{with} \quad  w(x) =(1-x)e_1+xe_2,
\end{equation}
where $e_1,e_2$ are the standard basis vectors of $\R^2_z$. Then
\begin{equation}
F(x,y)=y\bigl(y-a(x)\bigr)w(x).
\end{equation}
Note that this factorization singles out two branches, $y=0$ and $y=a(x)$, on
which $F$ vanishes. Hence every horizontal curve whose $xy$-projection
is contained in their union is singular for \emph{every} nonzero multiplier. 

Fix now an arbitrary target point
\begin{equation}
q_*=(x_*,y_*,z_*)\in\R^2_{x,y}\times\R^2_z .
\end{equation}
We reach $q_*$ in two steps,  illustrated in \cref{fig:construction-idea}. In the first, we construct a horizontal
curve from the origin to $(0,0,z_*)$, so its  $xy$-projection is a loop.
In the second, we move from $(0,0,z_*)$ to $q_*$ without changing the
$z$-coordinate. 

The first step is where the specific construction of $a$ enters. The function
$a$ is assembled from two disjoint systems of binary bumps, one
centered at $x=0$ and the other at $x=1$. A base loop crosses the
supports of these bumps, following either their graphs or the
$x$-axis, and then returns to the origin along the $x$-axis. The
$n$-th bumps in the two systems are normalized so that choosing to
follow their graphs produces, respectively, the vertical increments
$2^{-n}e_1$ and $2^{-n}e_2$. The binary expansions of the two coordinates of $z_*$ determine,
independently for each bump, whether the loop follows its graph or
remains on the $x$-axis. The resulting base path is closed, while its
horizontal lift accumulates the prescribed vertical displacement. Since the base path is contained in the zero set of $F$, the horizontal lift is therefore
singular for every nonzero multiplier and joins the origin to
$(0,0,z_*)$. 

For the second step, starting from $(0,0,z_*)$, we first move along the
$x$-axis until the $x$-coordinate is $x_*$ and then move in the
$y$-direction while keeping $x=x_*$. Neither part changes the
$z$-coordinate. 

Finally, since $F$ vanishes throughout the first step,  the nonzero multiplier given by the second step is also a multiplier for the entire concatenation, which is therefore singular.

\subsection*{Context}
Positive results for the Sard conjecture for general distributions have relied on low-dimensional,
generic, or analytic features.  Zelenko and Zhitomirskii proved a
strong form of the conjecture for generic rank-two distributions on
three-dimensional manifolds \cite{ZZ}.  This was extended to
distributions with a smooth Martinet surface by Belotto da Silva and
Rifford \cite{BdSR}, and to arbitrary analytic rank-two distributions
in dimension three by Belotto da Silva, Figalli, Parusi\'nski, and
Rifford \cite{BdSFPR}.  Belotto da Silva, Parusi\'nski, and Rifford
proved the conjecture for every smooth rank-three distribution in
dimension four and for generic smooth distributions of corank one
\cite{BdSPR}.  They also developed a general description of singular
curves for analytic distributions \cite{BdSPRSubanalytic}, yielding the minimal-rank Sard property under suitable assumptions \cite{BdSPRMinimalRank}.

A parallel line concerns homogeneous structures.  The conjecture is true for Carnot groups of step at most two and for several further
classes \cite{AGL,LDMOPV}.  Subsequent work obtained sharper
codimension estimates in step two \cite{OV}, established the result for
rank-two groups of step four and rank-three groups of step three
\cite{BV}, and treated all step-two and all filiform Carnot groups
\cite{BNV}.  Sard-type results also hold for the restriction of the
endpoint map of an arbitrary Carnot group to sufficiently regular
classes of controls, including piecewise real-analytic and piecewise
entire controls \cite{LRT}.  More recently, the Sard property together
with a sharp rectifiability estimate has been established for rank-two
structures on metabelian Lie groups \cite{LDLNPR}. 

For comparison, the \emph{minimizing} Sard conjecture postulates that the set of
points reached from a fixed point by singular \emph{length-minimizing} curves has
measure zero; see \cite[Conjecture~1]{RiffordTrelat} and also the second
question in \cite[Problem~III]{AgrachevOpen}.  Building on a density result of Rifford and Tr\'elat
\cite{RiffordTrelat}, Agrachev
\cite{AgrachevSmooth} proved that the set of endpoints of singular minimizing curves has
empty interior.
Building on earlier genericity results of Agrachev and Gauthier \cite{AGauthier}, Chitour, Jean, and Tr\'elat proved that generic sub-Riemannian structures of rank at least three admit no nontrivial singular minimizing curves \cite{CJT}.  More recently, Rifford
\cite{RiffordMinSard} proved the minimizing Sard conjecture for
 sub-Riemannian structures associated either with corank-two
distributions or with generic distributions of rank at least two.


\section{Graph distributions and a singularity criterion}

We use coordinates
\begin{equation}
 (x,y,z_1,z_2)\in\R^4=\R^2_{x,y}\times\R^2_z.
\end{equation}
Let $V=(V_1,V_2):\R^2_{x,y}\to\R^2$ be smooth and define
\begin{equation}\label{eq:fields}
 X\coloneq \partial_x-V_1(x,y)\partial_{z_1}
                  -V_2(x,y)\partial_{z_2},
 \qquad
 Y\coloneq \partial_y.
\end{equation}
Set
\begin{equation}
 \Delta\coloneq \Span\{X,Y\},
 \qquad
 F(x,y)\coloneq \partial_yV(x,y)\in\R^2.
\end{equation}
The ordered pair $(X,Y)$ is a global frame of $\Delta$.  Hence every
$W^{1,2}$ horizontal curve parametrized on an interval $[0,L]$ has a unique $L^2$ control relative to this
frame, namely $(u,v)\in L^2([0,L],\R^2)$ such that $\dot\gamma = u X + v Y$. Conversely, since the system
\begin{equation}\label{eq:triangular-system}
 \dot x=u,
 \qquad
 \dot y=v,
 \qquad
 \dot z=-uV(x,y),
\end{equation}
is triangular, every $(u,v)\in L^2([0,L],\R^2)$ determines a unique
horizontal curve $\gamma_{u,v}$ once the initial point is fixed.  We denote by
\begin{equation}
 \End_0^L:L^2([0,L],\R^2)\longrightarrow\R^4,
 \qquad
 \End_0^L(u,v)\coloneq \gamma_{u,v}(L),
\end{equation}
the corresponding control representation of the endpoint map, based at the origin.  For $L=1$ we simply write $\End_0$. Since $\End_0(u,v)=\mathcal E_0(\gamma_{u,v})$, and controls provide a smooth global chart for the $L^2$ manifold $\Omega_0$, a control is a critical point precisely when its associated horizontal curve is singular.

\begin{lemma}[Criterion for singular curves]\label{lem:abnormal}
Let $L>0$ and let $(u,v)\in L^2([0,L],\R^2)$.  Denote by
\begin{equation}
 \gamma=\gamma_{u,v}=(x,y,z):[0,L]\longrightarrow\R^4
\end{equation}
the associated horizontal trajectory starting at the origin.  Suppose that for some 
$\lambda \in\R^2\setminus\{0\}$,
\begin{equation}
 \lambda\cdot F(x(t),y(t))=0
 \qquad\text{for a.e. }t\in[0,L].
\end{equation}
Then $(u,v)$ is a critical point of $\End_0^L$.  Moreover, the
control of the affine reparametrization
$\bar\gamma(t)\coloneq \gamma(Lt)$, $t\in[0,1]$, is critical for $\End_0$.
\end{lemma}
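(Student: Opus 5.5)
We prove criticality directly: we exhibit a fixed nonzero covector $\eta\in(\R^4)^*$ that annihilates the image of $D_{(u,v)}\End_0^L$. The natural candidate is $\eta=(0,0,\lambda)$, which pairs only with the $z$-components. Then $\eta\cdot\End_0^L(u,v)=\lambda\cdot z(L)$. It therefore suffices to show that the directional derivative of the scalar functional
\begin{equation}
 \Phi(u,v)\coloneq \lambda\cdot z_{u,v}(L)=-\int_0^L u(t)\,\lambda\cdot V\bigl(x(t),y(t)\bigr)\dd t
\end{equation}
vanishes in every direction $(\delta u,\delta v)\in L^2$. Here $x(t)=\int_0^t u$ and $y(t)=\int_0^t v$, by the triangular system \eqref{eq:triangular-system}.

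The first step is to linearize. The maps $u\mapsto x$ and $v\mapsto y$ are bounded linear maps into $C^0$, with $\delta x=\int_0^t\delta u$ and $\delta y=\int_0^t\delta v$. Since $V$ is smooth, $\Phi$ is differentiable, and its derivative is
\begin{equation}
 D\Phi(\delta u,\delta v)=-\int_0^L\Bigl[\delta u\,\lambda\cdot V+u\,\lambda\cdot\partial_xV\,\delta x+u\,\lambda\cdot F\,\delta y\Bigr]\dd t .
\end{equation}
The hypothesis $\lambda\cdot F(x(t),y(t))=0$ a.e. kills the last term. The remaining task is to show that
\begin{equation}
 \int_0^L\bigl[\dot{\delta x}\,\lambda\cdot V+\dot x\,\lambda\cdot\partial_xV\,\delta x\bigr]\dd t=0 .
\end{equation}

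The second step handles these remaining terms, and it is the main obstacle. The plan is to integrate by parts. Set $g(t)\coloneq\lambda\cdot V(x(t),y(t))$. This function is absolutely continuous, with
\begin{equation}
 \dot g=\lambda\cdot\partial_xV\,\dot x+\lambda\cdot F\,\dot y=\lambda\cdot\partial_xV\,\dot x\quad\text{a.e.},
\end{equation}
again by the hypothesis. The integrand is then exactly $\frac{\dd}{\dd t}(g\,\delta x)$, so the integral equals $g(L)\delta x(L)-g(0)\delta x(0)=g(L)\delta x(L)$, using $\delta x(0)=0$.

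This boundary term need not vanish. The fix is to add a compensating component to the covector: take $\eta=(\lambda\cdot V(x(L),y(L)),\,0,\,\lambda)$. Its $x$-component contributes $g(L)\,\delta x(L)$ to $\eta\cdot D\End_0^L(\delta u,\delta v)$, and this cancels the $-g(L)\delta x(L)$ coming from the $z$-part. So $\eta\neq0$ annihilates the image of $D\End_0^L$, and $(u,v)$ is critical.

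One point needs care: the chain rule for $g$ along a $W^{1,2}$ curve. It holds because $V$ is $C^1$ and $(x,y)$ is absolutely continuous.

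For the reparametrization statement, $\bar\gamma$ has control $(Lu(Lt),Lv(Lt))$. The map $(u,v)\mapsto(Lu(L\cdot),Lv(L\cdot))$ is a linear isomorphism $L^2([0,L])\to L^2([0,1])$ that intertwines $\End_0^L$ with $\End_0$. Alternatively, the hypothesis is invariant under reparametrization, so the same argument applies verbatim with $L=1$. Either way, criticality transfers.
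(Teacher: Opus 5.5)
Your proposal is correct and follows essentially the same route as the paper: linearize, integrate the $(\lambda\cdot V)\,\delta u$ term by parts, annihilate the image with the covector $\eta=(\lambda\cdot V(x(L),y(L)),0,\lambda)$, and transfer criticality through the rescaling isomorphism $(u,v)\mapsto L\,(u(L\cdot),v(L\cdot))$. The only cosmetic difference is that you use $\lambda\cdot F=0$ before integrating by parts. The paper instead first derives the general identity $\lambda\cdot\delta z(L)=-(\lambda\cdot V(x(L),y(L)))\,\delta x(L)+\int_0^L(\lambda\cdot F)(v\,\delta x-u\,\delta y)\dd t$ and only then applies the hypothesis.
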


\begin{proof}
By \eqref{eq:triangular-system}, it holds
\begin{equation}
 x(t)=\int_0^t u(s)\dd s,
 \qquad
 y(t)=\int_0^t v(s)\dd s,
\end{equation}
and therefore
\begin{equation}
 \End_0^L(u,v)
 =
 \left(
 x(L),y(L),
 -\int_0^L u(t)V(x(t),y(t))\dd t
 \right).
\end{equation}
Let $(\widehat u,\widehat v)\in L^2([0,L],\R^2)$ be arbitrary, and set
\begin{equation}
 \delta x(t)\coloneq \int_0^t\widehat u(s)\dd s,
 \qquad
 \delta y(t)\coloneq \int_0^t\widehat v(s)\dd s.
\end{equation}
Differentiating the endpoint map in the direction
$(\widehat u,\widehat v)$ gives
\begin{equation}
 D_{(u,v)}\End_0^L(\widehat u,\widehat v)
 =
 \bigl(\delta x(L),\delta y(L),\delta z(L)\bigr),
\end{equation}
where
\begin{equation}
 \delta z(L)
 =
 -\int_0^L
 \left[
 \widehat u V
 +uV_x \delta x
 +uF \delta y
 \right]\dd t.
\end{equation}
Here and below, $V$, $V_x$, and $F$ inside the integrals are evaluated at
$(x(t),y(t))$.

Taking the scalar product with $\lambda$, we obtain
\begin{equation}\label{eq:identityabove}
 \lambda\cdot\delta z(L)
 =
 -\int_0^L
 \left[
 (\lambda\cdot V)\widehat u
 +u(\lambda\cdot V_x)\delta x
 +u(\lambda\cdot F)\delta y
 \right]\dd t.
\end{equation}
Along the reference trajectory,
\begin{equation}
 \frac{\dd}{\dd t}\bigl(\lambda\cdot V(x(t),y(t))\bigr)
 =
 u(\lambda\cdot V_x)+v(\lambda\cdot F)
 \qquad\text{for a.e. }t,
\end{equation}
and
\begin{equation}
 \frac{\dd}{\dd t}\delta x(t)=\widehat u(t)
 \qquad\text{for a.e. }t,
 \qquad
 \delta x(0)=0.
\end{equation}
Hence integration by parts gives
\begin{equation}
 \int_0^L(\lambda\cdot V)\widehat u\dd t
 =
 (\lambda\cdot V(x(L),y(L)))\delta x(L)
 -
 \int_0^L
 \left[
 u(\lambda\cdot V_x)+v(\lambda\cdot F)
 \right]\delta x\dd t.
\end{equation}
Substituting this identity in \eqref{eq:identityabove} yields
\begin{equation}
 \lambda\cdot\delta z(L)
 =
 -(\lambda\cdot V(x(L),y(L)))\delta x(L)
 +
 \int_0^L
 (\lambda\cdot F)
 \bigl(v\delta x-u\delta y\bigr)\dd t.
\end{equation}
By assumption the integral vanishes. Therefore, defining $\eta_L \in \R^4$ by
\begin{equation}
 \eta_L
 \coloneq 
 \bigl(
 \lambda\cdot V(x(L),y(L)), 0, \lambda
 \bigr),
\end{equation}
it holds 
\begin{equation}
 \operatorname{Im}D_{(u,v)}\End_0^L\subset\ker\eta_L.
\end{equation}
Since $\lambda\neq0$, the covector $\eta_L$ is nonzero.  Hence $D_{(u,v)}\End_0^L$ is not surjective.

Finally, define the linear isomorphism
\begin{equation}
 R_L:L^2([0,L],\R^2)\longrightarrow L^2([0,1],\R^2),
 \qquad
 R_L(u,v)(t)\coloneq L\,(u(Lt),v(Lt)).
\end{equation}
The trajectory associated with $R_L(u,v)$ is $t\mapsto\gamma(Lt)$, and
\begin{equation}
 \End_0\circ R_L=\End_0^L.
\end{equation}
Since $R_L$ is an isomorphism, the two endpoint differentials have the
same image.  The reparametrized control is therefore a critical point for
$\End_0$.
\end{proof}

\section{Construction of the distribution}

\subsection{The binary coefficient}
The purpose of this subsection is to construct a compact family of smooth
functions whose graphs independently select binary bumps.  The symmetry
imposed below makes the first moment of each bump vanish.  This will allow
the two translated systems to produce
exactly the vertical directions $e_1$ and $e_2$.

Fix $r=1/4$ and a nonzero nonnegative function
$\psi\in C_c^\infty((0,1))$.  Put
\begin{equation}
 C_\psi\coloneq \int_0^1\psi(s)^3\dd s,
 \qquad
 I_n\coloneq \left(\frac{r}{n+1},\frac{r}{n}\right),
 \qquad
 \ell_n\coloneq |I_n|=\frac{r}{n(n+1)}.
\end{equation}
For $x\in I_n$, define
\begin{equation}
 \rho_n(x)\coloneq
 \kappa_n\psi\!\left(\frac{x-r/(n+1)}{\ell_n}\right),
 \qquad
 \kappa_n^3\coloneq\frac{3\,2^{-n}}{\ell_nC_\psi},
\end{equation}
and extend $\rho_n$ by zero.  Set
\begin{equation}
 a_n(x)\coloneq \rho_n(x)+\rho_n(-x),
 \qquad
 a_0(x)\coloneq\sum_{n\geq1}a_n(x).
\end{equation}
The supports of the $a_n$ are pairwise disjoint, each $a_n$ is even, and
the family $(\operatorname{supp}a_n)_{n\geq1}$ accumulates only at the
origin.

Let $E=\{0,1\}^{\N}$ with the product topology; it is compact.  For
$\varepsilon=(\varepsilon_n)_{n\geq1}\in E$, define
\begin{equation}
 y_\varepsilon(x)\coloneq\sum_{n\geq1}\varepsilon_na_n(x),
 \qquad
 b(\varepsilon)\coloneq\sum_{n\geq1}\varepsilon_n2^{-n}.
\end{equation}
Thus $y_\varepsilon$ independently selects either zero or the full symmetric
bump on each pair of intervals. See \cref{fig:binary-bubbles}.

\begin{figure}[t]
\centering
\begin{minipage}[t]{0.47\textwidth}
\centering
\small\textbf{(a)} $a_0$\par\vspace{0.25em}
\begin{tikzpicture}[x=2.45cm,y=2.6cm]
 \draw[-{Stealth[length=1.6mm]}] (-1.08,0)--(1.08,0)
   node[above right=-1pt] {$x$};
 \draw[-{Stealth[length=1.6mm]}] (0,0)--(0,1.15);
 \node[below] at (-1,0) {$-r$};
 \node[below] at (0,0) {$0$};
 \node[below] at (1,0) {$r$};
 \draw[kupkablue,very thick]
  (-.96,0)..controls(-.88,.78)and(-.72,.78)..(-.64,0)
  (-.64,0)--(-.53,0)
  (-.53,0)..controls(-.47,.57)and(-.36,.57)..(-.30,0)
  (-.30,0)--(-.24,0)
  (-.24,0)..controls(-.20,.38)and(-.13,.38)..(-.09,0)
  (-.09,0)--(.09,0)
  (.09,0)..controls(.13,.38)and(.20,.38)..(.24,0)
  (.24,0)--(.30,0)
  (.30,0)..controls(.36,.57)and(.47,.57)..(.53,0)
  (.53,0)--(.64,0)
  (.64,0)..controls(.72,.78)and(.88,.78)..(.96,0);
\end{tikzpicture}
\end{minipage}\hfill
\begin{minipage}[t]{0.47\textwidth}
\centering
\small\textbf{(b)} one $y_\varepsilon$\par\vspace{0.25em}
\begin{tikzpicture}[x=2.45cm,y=2.6cm]
 \draw[-{Stealth[length=1.6mm]}] (-1.08,0)--(1.08,0)
   node[above right=-1pt] {$x$};
 \draw[-{Stealth[length=1.6mm]}] (0,0)--(0,1.15);
 \node[below] at (-1,0) {$-r$};
 \node[below] at (0,0) {$0$};
 \node[below] at (1,0) {$r$};
 \draw[kupkaorange,very thick]
  (-.96,0)..controls(-.88,.78)and(-.72,.78)..(-.64,0)
  (-.64,0)--(-.24,0)
  (-.24,0)..controls(-.20,.38)and(-.13,.38)..(-.09,0)
  (-.09,0)--(.09,0)
  (.09,0)..controls(.13,.38)and(.20,.38)..(.24,0)
  (.24,0)--(.64,0)
  (.64,0)..controls(.72,.78)and(.88,.78)..(.96,0);
\end{tikzpicture}
\end{minipage}
\caption{The symmetric accumulating binary bumps and one selected function
$y_\varepsilon$.  On each pair of intervals, $y_\varepsilon$ either vanishes
or coincides with the corresponding bump $a_n$.}
\label{fig:binary-bubbles}
\end{figure}

\begin{lemma}[The binary gadget]\label{lem:binary}
The following properties hold.
\begin{enumerate}
 \item The function $a_0$ belongs to $C_c^\infty(\R)$, has support in
 $(-r,r)$, and is flat at the origin.  Moreover,
 \begin{equation}\label{eq:digit-moments}
  \frac16\int_\R a_n(x)^3\dd x=2^{-n},
  \qquad
  \int_\R xa_n(x)^3\dd x=0
  \qquad(n\geq1).
 \end{equation}
 \item The functions $y_\varepsilon$ belong to
 $W^{1,2}(\R)\cap C_c^\infty(\R)$, have support in $(-r,r)$, and
 the map $\varepsilon\mapsto y'_\varepsilon$ is continuous from $E$ to
 $L^2(\R)$.
 \item The map $b:E\to[0,1]$ is continuous and surjective.
\end{enumerate}
\end{lemma}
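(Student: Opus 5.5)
The plan is to check the three items in order. The only delicate point is smoothness of $a_0$ at the origin, which requires decay estimates for the bumps.

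\textbf{Item (1): smoothness and flatness.} Each $a_n$ is smooth with support in $I_n\cup(-I_n)\subset(-r,r)$. These supports are pairwise disjoint and accumulate only at $0$, so $a_0$ is smooth on $\R\setminus\{0\}$ and vanishes for $|x|\geq r$. For smoothness and flatness at $0$, I will show that for every $k\geq0$,
\begin{equation}
 \sup_{x\in I_n}|\rho_n^{(k)}(x)|\le \kappa_n\ell_n^{-k}\|\psi^{(k)}\|_\infty
\end{equation}
is $o(|x|^m)$ uniformly on $I_n$ for every $m$, as $n\to\infty$. On $I_n$ we have $|x|\sim r/n$ and $\ell_n\sim r/n^2$, so $\ell_n^{-k}$ and $|x|^{-m}$ grow only polynomially in $n$. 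Meanwhile $\kappa_n=(3\cdot2^{-n}/(\ell_nC_\psi))^{1/3}\lesssim n^{2/3}2^{-n/3}$ decays exponentially. Hence $\sup_{I_n}|a_0^{(k)}|/|x|^m\to0$. By induction on $k$, using the mean value theorem and the fact that difference quotients at $0$ tend to $0$, all derivatives of $a_0$ exist at $0$ and vanish there. This decay estimate, with exponential decay beating polynomial growth, is the main obstacle; the rest is bookkeeping.

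\textbf{Item (1): moments.} The substitution $x=r/(n+1)+\ell_ns$ gives
\begin{equation}
 \int\rho_n^3=\kappa_n^3\ell_nC_\psi=3\cdot2^{-n},
\end{equation}
and $\rho_n(-x)$ contributes the same amount, so $\frac16\int a_n^3=2^{-n}$. Since $a_n$ is even, $xa_n(x)^3$ is odd, and its integral vanishes.

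\textbf{Item (2).} Since $\varepsilon_n\in\{0,1\}$, the same derivative bounds apply to $y_\varepsilon$. The argument of item (1) therefore shows $y_\varepsilon\in C_c^\infty$ with support in $(-r,r)$, and in particular $y_\varepsilon\in W^{1,2}$. For continuity, the disjointness of supports gives
\begin{equation}
 \|y'_\varepsilon-y'_{\varepsilon'}\|_{L^2}^2=\sum_n|\varepsilon_n-\varepsilon'_n|\,\|a_n'\|_{L^2}^2.
\end{equation}
Here $\|a_n'\|_{L^2}^2=2\kappa_n^2\ell_n^{-1}\|\psi'\|_{L^2}^2$ is summable, since it is of order $2^{-2n/3}\,\mathrm{poly}(n)$. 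If $\varepsilon'\to\varepsilon$ in the product topology, then the first $N$ terms eventually vanish, and the tail is smaller than any prescribed $\delta$ once $N$ is large. This gives continuity.

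\textbf{Item (3).} The map $b$ is continuous because $|b(\varepsilon)-b(\varepsilon')|\le2^{-N}$ whenever $\varepsilon$ and $\varepsilon'$ agree up to index $N$. It is surjective because every $t\in[0,1]$ has a binary expansion, with $t=1$ given by $\varepsilon\equiv1$.
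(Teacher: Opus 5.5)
Your proposal is correct and follows essentially the same route as the paper: the scaling bound $\|a_n^{(k)}\|_\infty\le \kappa_n\ell_n^{-k}\|\psi^{(k)}\|_\infty\lesssim 2^{-n/3}\,\mathrm{poly}(n)$ gives smoothness at the origin, substitution plus parity gives the moments, and tail estimates using disjoint supports give the continuity of $\varepsilon\mapsto y'_\varepsilon$ and of $b$. You are slightly more explicit than the paper about why the derivatives exist at $0$, since your $o(|x|^m)$ bounds make the difference-quotient induction immediate, whereas the paper simply asserts that all derivatives extend continuously by zero.
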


\begin{proof}
The normalization of $\kappa_n$ and the evenness of $a_n$ give
\begin{equation}
 \frac16\int_\R a_n^3\dd x
 =\frac13\kappa_n^3\ell_nC_\psi=2^{-n},
\end{equation}
while the first moment vanishes by parity.  Moreover, for every $k\geq0$,
\begin{equation}
 \|a_n^{(k)}\|_\infty
 \leq C_k2^{-n/3}[n(n+1)]^{k+1/3}\longrightarrow0.
\end{equation}
Thus all derivatives extend continuously by zero at  $x=0$, proving \textup{(i)}.

The same rescaling gives
\begin{equation}
 \|a_n'\|_{L^2}^2
 \leq C2^{-2n/3}[n(n+1)]^{5/3},
 \qquad
 \|a_n\|_{L^2}^2
 \leq C2^{-2n/3}[n(n+1)]^{-1/3}.
\end{equation}
In particular,
\begin{equation}\label{eq:w12-estimate}
 \sup_{n\geq1}n^2\|a_n\|_{W^{1,2}(\R)}^2<\infty,
\end{equation}
and the squared $W^{1,2}$ norms of the $a_n$ are summable.  Smoothness
of every $y_\varepsilon$ follows as in \textup{(i)}.  If $\varepsilon$
and $\tilde\varepsilon$ agree through the $N$-th digit, disjointness of
the supports yields
\begin{equation}
 \|y'_\varepsilon-y'_{\tilde\varepsilon}\|_{L^2(\R)}^2
 \leq\sum_{n>N}\|a_n'\|_{L^2(\R)}^2\longrightarrow0,
\end{equation}
which proves \textup{(ii)}.

Finally, for \textup{(iii)}, the same tail argument gives continuity
of $b$, while surjectivity is the usual binary expansion of points of
$[0,1]$.
\end{proof}

\subsection{The global coefficient \texorpdfstring{$V$}{V}}

The goal of this subsection is to construct the smooth coefficient
\begin{equation}
 V:\R^2_{x,y}\longrightarrow\R^2
\end{equation}
defining the graph distribution.  Define
\begin{equation}
 a(x)\coloneq a_0(x)+a_0(x-1).
\end{equation}
The two summands have disjoint supports, and \cref{lem:binary} shows
that $a\in C_c^\infty(\R)$.  We set
\begin{equation}\label{eq:w}
 w(x)\coloneq(1-x)e_1+xe_2
\end{equation}
and
\begin{equation}
 V(x,y)\coloneq
 \left(\frac{y^3}{3}-\frac{a(x)y^2}{2}\right)w(x).
\end{equation}
Then
\begin{equation}\label{eq:F-global}
 F(x,y)=\partial_yV(x,y)
 =y\bigl(y-a(x)\bigr)w(x).
\end{equation}
Thus $V$ is the coefficient defining the graph distribution, while $F$
will control both the singularity criterion and the vertical brackets.

With this choice of $V$, let $X,Y$ be the vector fields in
\eqref{eq:fields} and set $\Delta\coloneq\Span\{X,Y\}$.  The identities
used below are
\begin{equation}\label{eq:root-identities}
 \begin{gathered}
  F(x,0)=F(x,a(x))=0,
  \qquad
  V(x,0)=0,\\
  V(x,a(x))=-\frac{a(x)^3}{6}w(x).
 \end{gathered}
\end{equation}

\begin{proposition}\label{prop:bracket-generating}
The distribution $\Delta$ is bracket-generating at every point of
$\R^4$, and its maximal step is five.  It is not equiregular.
\end{proposition}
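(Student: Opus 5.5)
The plan is to compute the iterated brackets of $X$ and $Y$ explicitly. The key structural fact is that $V$ depends only on $(x,y)$, so every bracket of length at least two is a \emph{vertical} field $G(x,y)\cdot\partial_z$, with $G$ an $\R^2$-valued function of $(x,y)$. For such fields one has
\begin{equation}
 [Y,G\cdot\partial_z]=\partial_yG\cdot\partial_z,
 \qquad
 [X,G\cdot\partial_z]=\partial_xG\cdot\partial_z,
\end{equation}
because the $z$-component of $X$ does not depend on $z$. Moreover, two vertical fields of this form commute. Starting from $[Y,X]=-F\cdot\partial_z$, it follows that the brackets of length $k\geq2$ span exactly the vertical directions
\begin{equation}
 \Span\{\partial^\alpha F(x,y) : |\alpha|\leq k-2\}.
\end{equation}
Here the extra factor in a bracket can only be $X$ or $Y$, since brackets of two vertical fields vanish. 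So bracket generation and the step reduce to computing derivatives of $F=y(y-a(x))w(x)$ from \eqref{eq:F-global}.

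\textbf{Upper bound on the step.} Differentiating gives $F_y=(2y-a)w$ and $F_{yy}=2w$, which is realized by the length-four bracket $[Y,[Y,[Y,X]]]$. Next, $F_{yyx}=2w'=2(e_2-e_1)$, realized by the length-five bracket $[X,[Y,[Y,[Y,X]]]]$. The vectors $w(x)=(1-x,x)$ and $e_2-e_1=(-1,1)$ are linearly independent for every $x$, since their determinant is $(1-x)+x=1$. Together with $X$ and $Y$ they therefore span $T_q\R^4$ at every point. This gives bracket generation everywhere, with step at most five.

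\textbf{The step is attained.} I would evaluate at $q=(1/2,0,z)$, where $a\equiv0$ on a neighbourhood of $x=1/2$; this holds because $\operatorname{supp}a\subset(-r,r)\cup(1-r,1+r)$ with $r=1/4$. Near such points $F=y^2w$. Hence, at $y=0$:
\begin{itemize}
 \item $F$ and all its first derivatives vanish;
 \item among the second derivatives, only $F_{yy}=2w$ survives, since $F_{xy}=2yw'$ and $F_{xx}=0$.
\end{itemize}
The growth vector at $q$ is therefore $(2,2,2,3,4)$, and the step is exactly five there.

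\textbf{Non-equiregularity.} At $q'=(1/2,1,z)$ we have $F=w\neq0$, so $[Y,X]$ is nonzero at $q'$ and $\dim\Delta^2_{q'}=3$. At $q$, instead, $\dim\Delta^2_q=2$. The dimension of $\Delta^2$ thus jumps, so $\Delta$ is not equiregular. At $q'$ one also gets $F_x=(e_2-e_1)$, which already yields step three there; this is not needed for the conclusion.

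The only step needing care is justifying that the brackets span exactly the stated sets of vertical vectors, in particular that no mixed brackets contribute anything new. This is handled by the commutation of vertical fields noted above.
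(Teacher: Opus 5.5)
Your proof is correct and follows essentially the same route as the paper's: you use the same three brackets $[Y,X]$, $[Y,[Y,[Y,X]]]$ and $[X,[Y,[Y,[Y,X]]]]$ with $\det(w(x),e_2-e_1)=1$ for the upper bound, and you show sharpness at a point $(x_0,0)$ with $x_0\notin\operatorname{supp}a$, where $F=y^2w$. Non-equiregularity then comes from the jump in $\dim(\Delta+[\Delta,\Delta])$ between $(x_0,0)$ and a point where $y(y-a(x))\neq0$, exactly as in the paper; your explicit choices $x_0=1/2$ and $q'=(1/2,1,z)$, together with the commutation argument for vertical fields, just spell out what the paper states more briefly (minor wording point: your span formula should refer to brackets of length between $2$ and $k$, not exactly $k$).
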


\begin{proof}
Since $F=V_y$ and the coefficients are
independent of $z$,
\begin{equation}
 \begin{aligned}
  {}[Y,X]&=-F\cdot\partial_z,\\
  [Y,[Y,[Y,X]]]&=-F_{yy}\cdot\partial_z
  =-2w(x)\cdot\partial_z,\\
  [X,[Y,[Y,[Y,X]]]]&=-2(e_2-e_1)\cdot\partial_z,
 \end{aligned}
\end{equation}
where, we recall, $w(x)= (1-x)e_1+xe_2$. Since
\begin{equation}
 \det\bigl(w(x),e_2-e_1\bigr)=1,
\end{equation}
the vectors $w(x)$ and $e_2-e_1$ are independent for every $x$.
Consequently, brackets of length at most five
span $T\R^4$ at every point.

To see that the bound is sharp, choose $x_0$ outside the support of $a$.
In a neighborhood of $(x_0,0)$ one has
\begin{equation}
 V(x,y)=\frac{y^3}{3}w(x),
 \qquad
 F(x,y)=y^2w(x).
\end{equation}
Vertical vector fields commute, while bracketing
$G(x,y)\cdot\partial_z$ with $X$ or $Y$ differentiates $G$ in $x$ or
$y$, up to sign.  Hence vertical brackets of length $j\geq2$ come from
derivatives of $F=y^2w$ of order $j-2$.  At $(x_0,0)$, those of order at
most one vanish, those of order two span only $w(x_0)$, and the second
vertical direction first appears at order three through
$\partial_x\partial_y^2F(x_0,0)=2(e_2-e_1)$.  Thus the maximal step is five.

Finally, wherever $p(x,y)\coloneq y(y-a(x))$ is nonzero, the bracket
\begin{equation}
 [Y,X]=-p w\cdot\partial_z
\end{equation}
is nonzero, so $\Delta+[\Delta,\Delta]$ has dimension three.  At
$(x_0,0)$ it has dimension two, proving that $\Delta$ is not equiregular.
\end{proof}

\section{Global singular reachability}

\subsection{Closed curves producing vertical displacements}
Recall that $r=1/4$.  For $\varepsilon,\delta\in E=\{0,1\}^{\N}$,
define two closed base curves at $(0,0)$, each parametrized on $[0,3]$,
by
\begin{equation}
 \sigma^0_\varepsilon(t)\coloneq
 \begin{cases}
  (-rt,0),&0\leq t\leq1,\\
  \bigl(-r+2r(t-1),y_\varepsilon(-r+2r(t-1))\bigr),
       &1\leq t\leq2,\\
  (r(3-t),0),&2\leq t\leq3,
 \end{cases}
\end{equation}
and
\begin{equation}
 \sigma^1_\delta(t)\coloneq
 \begin{cases}
  ((1-r)t,0),&0\leq t\leq1,\\
  \bigl(1-r+2r(t-1),y_\delta(-r+2r(t-1))\bigr),
       &1\leq t\leq2,\\
  ((1+r)(3-t),0),&2\leq t\leq3.
 \end{cases}
\end{equation}

The two families of closed curves are illustrated in
\cref{fig:vertical-loops}.

\begin{figure}[t]
\centering
\begin{minipage}[t]{0.47\textwidth}
\centering
\small\textbf{(a)} $\sigma^0_\varepsilon$\par\vspace{0.25em}
\begin{tikzpicture}[x=2.1cm,y=2.2cm]
 \draw[-{Stealth[length=1.6mm]}] (-1.12,0)--(1.12,0)
   node[above right=-1pt] {$x$};
 \draw[-{Stealth[length=1.6mm]}] (0,0)--(0,1.02)
   node[left] {$y$};
 \draw[kupkagray,thick,-{Stealth[length=1.5mm]}]
   (0,-.055)--(-1,-.055);
 \draw[kupkagray,thick,-{Stealth[length=1.5mm]}]
   (1,-.055)--(0,-.055);
 \draw[kupkablue,very thick,-{Stealth[length=1.7mm]}]
  (-1,0)--(-.96,0)..controls(-.90,.78)and(-.76,.78)..(-.68,0)
  --(-.30,0)..controls(-.285,.26)and(-.225,.26)..(-.20,0)
  --(.105,0)..controls(.117,.13)and(.147,.13)..(.155,0)
  --(.38,0)..controls(.42,.48)and(.53,.48)..(.56,0)--(1,0);
 \foreach \t in {-1,0,1}
  \draw[black,line width=.45pt] (\t,-.035)--(\t,.035);
 \fill (0,0) circle (1.1pt);
 \node[below=4pt] at (-1,0) {$-r$};
 \node[below=4pt] at (0,0) {$0$};
 \node[below=4pt] at (1,0) {$r$};
\end{tikzpicture}
\end{minipage}\hfill
\begin{minipage}[t]{0.47\textwidth}
\centering
\small\textbf{(b)} $\sigma^1_\delta$\par\vspace{0.25em}
\begin{tikzpicture}[x=3.2cm,y=2.2cm]
 \draw[-{Stealth[length=1.6mm]}] (-.06,0)--(1.43,0)
   node[above right=-1pt] {$x$};
 \draw[-{Stealth[length=1.6mm]}] (0,0)--(0,1.02)
   node[left] {$y$};
 \draw[kupkagray,thick,-{Stealth[length=1.5mm]}]
   (0,-.055)--(.75,-.055);
 \draw[kupkagray,thick,-{Stealth[length=1.5mm]}]
   (1.25,-.09)--(0,-.09);
 \draw[kupkaorange,very thick,-{Stealth[length=1.7mm]}]
  (.75,0)--(.76,0)..controls(.78,.78)and(.82,.78)..(.84,0)
  --(.925,0)..controls(.93,.26)and(.945,.26)..(.95,0)
  --(1.025,0)..controls(1.03,.13)and(1.045,.13)..(1.05,0)
  --(1.095,0)..controls(1.11,.48)and(1.14,.48)..(1.16,0)--(1.25,0);
 \foreach \t in {0,.75,1,1.25}
  \draw[black,line width=.45pt] (\t,-.035)--(\t,.035);
 \fill (0,0) circle (1.1pt);
 \node[below=6pt] at (0,0) {$0$};
 \node[below=3pt] at (.75,0) {$1-r$};
 \node[below=3pt] at (1,0) {$1$};
 \node[below=3pt] at (1.25,0) {$1+r$};
\end{tikzpicture}
\end{minipage}
\caption{Representative curves from the two closed families.  The colored
parts follow the selected binary graphs from left to right; the gray arrows
indicate the portions on the $x$-axis that close each curve at the origin.}
\label{fig:vertical-loops}
\end{figure}

\begin{proposition}[Vertical displacements from closed curves]\label{prop:vertical-curves}
The horizontal lifts of $\sigma^0_\varepsilon$ and
$\sigma^1_\delta$ starting at the origin end respectively at
\begin{equation}
 \bigl(0,0,b(\varepsilon)e_1\bigr),
 \qquad
 \bigl(0,0,b(\delta)e_2\bigr).
\end{equation}
Along both curves one has $F(x(t),y(t))=0$ almost everywhere.
Moreover, for every $z\in\R^2$ there is a finite concatenation of these
closed curves and their time reversals whose horizontal lift joins
$(0,0,0)$ to $(0,0,z)$ and still satisfies $F=0$ almost everywhere.
\end{proposition}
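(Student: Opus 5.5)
The plan is to integrate $\dot z=-\dot x\,V(x,y)$ piece by piece along each closed curve, using the identities \eqref{eq:root-identities}. Along any segment on the $x$-axis we have $y=0$, so $V=0$ and $z$ is constant. The first and third pieces of both $\sigma^0_\varepsilon$ and $\sigma^1_\delta$ therefore contribute nothing, and only the middle piece matters.

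For $\sigma^0_\varepsilon$ on $[1,2]$, I will reparametrize by $x=-r+2r(t-1)$, which increases from $-r$ to $r$, with $y=y_\varepsilon(x)$. The increment is then
\[
\Delta z=-\int_{-r}^{r}V(x,y_\varepsilon(x))\dd x .
\]
On $(-r,r)$ we have $a=a_0$, because the translated system $a_0(\cdot-1)$ is supported in $(1-r,1+r)$. Since the supports of the $a_n$ are disjoint, at each $x$ the value $y_\varepsilon(x)$ is either $0$ or $a(x)$. In the first case $V=0$; in the second $V=-\frac{a_n^3}{6}w$. Hence
\[
\Delta z=\sum_n\varepsilon_n\int\frac{a_n(x)^3}{6}\bigl((1-x)e_1+xe_2\bigr)\dd x .
\]
By \eqref{eq:digit-moments}, the zeroth moment equals $2^{-n}$ and the first moment vanishes, so this sum is $\sum\varepsilon_n2^{-n}e_1=b(\varepsilon)e_1$.

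For $\sigma^1_\delta$ I do the same with $x=1+s$, $s\in(-r,r)$, and $y=y_\delta(s)$. Now $a(x)=a_0(s)$ and $w(1+s)=-se_1+(1+s)e_2$. The moment identities kill the $s$ terms and give $b(\delta)e_2$.

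Interchanging the sum and the integral is justified by dominated convergence, since $\sum\|a_n\|_\infty^3\ell_n<\infty$. The vanishing $F=0$ a.e.\ follows because at every time $y\in\{0,a(x)\}$ and $F$ vanishes on both branches. I would also note that each curve is Lipschitz, with $y_\varepsilon\in W^{1,2}$ by \cref{lem:binary}, so its control lies in $L^2$.

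For a general $z=(z_1,z_2)$, choose integers $N_i\ge|z_i|$. Write $|z_i|=k_i+\theta_i$ with $k_i\in\N$ and $\theta_i\in[0,1)$. By \cref{lem:binary}(iii), surjectivity of $b$ gives $b(\varepsilon^{(1)})=1$ and $b(\varepsilon')=\theta$ for suitable sequences. I will concatenate copies of $\sigma^0$ (with $\varepsilon$ all ones, repeated $k_1$ times, then $\varepsilon$ with $b=\theta_1$) and similarly $\sigma^1$ copies. When $z_i<0$ I use time reversals instead.

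Two facts make the concatenation work. First, the base loops all start and end at $(0,0)$. Second, the $z$-increment of a lift depends only on the base path, because the equation is $z$-independent and translation-invariant in $z$. So increments add along a concatenation. A time reversal of a base path reverses $\dot x$ and negates the increment, and $F=0$ is clearly preserved. The required properties thus hold on each piece, hence on the whole curve.

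The main point needing care is the computation of the middle integral for $\sigma^1$. There one must check that $a$ coincides with the translated $a_0$ on $(1-r,1+r)$ and track how $w$ shifts. Beyond that, the only subtlety is justifying the termwise integration over infinitely many bumps.
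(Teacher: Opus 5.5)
Your proposal is correct and follows essentially the same route as the paper. The axis segments contribute nothing since $V(x,0)=0$, and the middle segment is computed via $V(x,a(x))=-\frac{a^3}{6}w$ together with the vanishing first moments of the $a_n$, shifting by $x=1+s$ for $\sigma^1_\delta$. Arbitrary $z$ is then reached by splitting into integer and fractional parts using the surjectivity of $b$, time reversal, and $z$-translation invariance. Your dominated-convergence justification of the termwise integration and your remark that the controls lie in $L^2$ are welcome details the paper leaves implicit; the auxiliary integers $N_i$ you introduce are never used and can be dropped.
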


\begin{proof}
The axis portions contribute no vertical displacement.  Along the
middle portion of $\sigma^0_\varepsilon$, the graph height is pointwise
either zero or $a(x)$.  Thus \eqref{eq:root-identities} and
\eqref{eq:digit-moments}, together with
$w(x)=e_1+x(e_2-e_1)$, give
\begin{equation}\label{eq:first-direction-moment}
 \frac16\int_\R a_n(x)^3w(x)\dd x=2^{-n}e_1.
\end{equation}
Consequently,
\begin{equation}
 -\int_{-r}^rV(x,y_\varepsilon(x))\dd x
 =\sum_{n\geq1}\varepsilon_n2^{-n}e_1
 =b(\varepsilon)e_1.
\end{equation}
For $\sigma^1_\delta$, write $x=1+s$.  Since
$w(1+s)=e_2+s(e_2-e_1)$, the same moment identities give
\begin{equation}\label{eq:second-direction-moment}
 \frac16\int_\R a_n(s)^3w(1+s)\dd s=2^{-n}e_2.
\end{equation}
Thus the displacement of this curve is $b(\delta)e_2$.  This proves the two
endpoint formulas.  Since the graph height is zero or $a(x)$,
\eqref{eq:root-identities} also gives $F=0$ along both curves.

Since $e_1,e_2$ are a basis, it is enough to realize an arbitrary real
multiple of either vector.  For either direction and $\alpha\in\R$, write
$|\alpha|=N+\theta$ with $N\in\N\cup\{0\}$ and $\theta\in[0,1)$.
By \cref{lem:binary}\textup{(iii)}, the surjectivity of $b$ gives a
closed curve with displacement $\theta$ times the chosen direction,
while $N$ copies of a curve with unit displacement produce the integer
part.  If $\alpha<0$, reverse these curves in time.  Indeed, replacing a
control $\mathsf{u}(t)$ by $-\mathsf{u}(L-t)$ changes $z(L)$ to its negative and preserves $F=0$.
Translation invariance in the $z$ variables allows these curves to be
concatenated at any vertical point.
\end{proof}
\begin{remark}[Kupka's map inside the endpoint map]
\label{sec:kupka-map}

We conclude by making precise how Kupka's counterexample (in the form introduced in \cite{yomdincomte}) occurs inside
the endpoint map of the distribution constructed above.  Set
\begin{equation}
 \phi(s)\coloneq3s^2-2s^3.
\end{equation}
Note that $\phi(0)=\phi(1)-1=0$ and $\phi'(0)=\phi'(1)=0$. Kupka's map  is the following infinite dimensional cubic polynomial:
\begin{equation}
 P:\ell^2\longrightarrow\R,
 \qquad
 P(\xi)\coloneq\sum_{n\geq1}2^{-n}\phi(n\xi_n).
\end{equation}
The critical set of $P$ contains the binary cube
\begin{equation}
 \mathcal Q\coloneq
 \left\{\xi\in\ell^2:\xi_n\in\left\{0,\frac1n\right\}
 \text{ for every }n\right\}.
\end{equation}
The image of this cube under $P$ is the whole segment $[0,1]$.

We now identify this map directly in the endpoint construction. To this end, we define the  continuous injective linear map
\begin{equation}
 T:\ell^2\longrightarrow W^{1,2}(\R),
 \qquad
 T(\xi)\coloneq\sum_{n\geq1}n\xi_na_n.\end{equation}
For
$\xi,\eta\in\ell^2$, put
\begin{equation}
 h_{\xi,\eta}(x)\coloneq T(\xi)(x)+T(\eta)(x-1).
\end{equation}
Consider the closed base curve that moves from $(0,0)$ to $(-r,0)$ along
the axis, follows the graph of $h_{\xi,\eta}$ from $x=-r$ to $x=1+r$,
and returns to the origin along the axis.  Using affine parametrizations
on three unit time intervals, the endpoint of its horizontal lift is therefore $ 
 \bigl(0,0,P(\xi),P(\eta)\bigr).$
\end{remark}
\subsection{Singular curves in the base plane}

The affine $x$-dependence of the direction $w(x)$ makes movement in
the base plane immediate.

\begin{lemma}[Singular curves in  the base plane]\label{lem:base-access}
For every $(x_*,y_*)\in\R_{x,y}^2$, define the broken line
\begin{equation}\label{eq:base-path}
 \omega_{x_*,y_*}(t)\coloneq
 \begin{cases}
  (tx_*,0),&0\leq t\leq1,\\
  (x_*,(t-1)y_*),&1\leq t\leq2.
 \end{cases}
\end{equation}
Its horizontal lift has zero vertical displacement, and the path admits
the nonzero multiplier $\lambda_*=(x_*,x_*-1)$; namely,
\begin{equation}
 \lambda_*\cdot F(\omega_{x_*,y_*}(t))=0
 \qquad\text{for a.e. }t.
\end{equation}
\end{lemma}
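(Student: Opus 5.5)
The proof is a direct computation on the two legs of the broken line, using the explicit forms of $V$ and $F$ from \eqref{eq:F-global} and \eqref{eq:root-identities}. There are two claims to check: the horizontal lift has no vertical displacement, and $\lambda_*$ annihilates $F$ along the path. For both we use the lift equation $\dot z=-\dot x\,V(x,y)$ from \eqref{eq:triangular-system}, applied separately on $[0,1]$ and on $[1,2]$.

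\emph{Vertical displacement.} On the first leg $y\equiv 0$, so $V(x,0)=0$ by \eqref{eq:root-identities}. Hence $\dot z=0$ there, even though $\dot x=x_*$ may be nonzero. On the second leg $x\equiv x_*$, so $\dot x=0$ and again $\dot z=0$, whatever the value of $V$. The lift therefore ends at $(x_*,y_*,0)$. By translation invariance in $z$, any lift started at $(0,0,z)$ ends at $(x_*,y_*,z)$.

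\emph{Multiplier.} On the first leg $F(x,0)=0$ identically, so $\lambda\cdot F=0$ for every $\lambda$. On the second leg we have
\begin{equation}
 F(x_*,y)=y\bigl(y-a(x_*)\bigr)\,w(x_*),
\end{equation}
which is a scalar multiple of the fixed vector $w(x_*)=(1-x_*,x_*)$. It therefore suffices to find a nonzero $\lambda_*$ orthogonal to $w(x_*)$. The stated choice works:
\begin{equation}
 \lambda_*\cdot w(x_*)=x_*(1-x_*)+(x_*-1)x_*=0 .
\end{equation}
This $\lambda_*$ is nonzero because its two entries $x_*$ and $x_*-1$ cannot vanish simultaneously. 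This is exactly where the affine dependence of $w$ on $x$ is used: $w(x)$ never vanishes, so its orthogonal complement is always one-dimensional and a fixed multiplier exists.

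There is no real obstacle here. The only point needing care is that the conclusion holds for every $t$ away from the corner at $t=1$, hence almost everywhere. The reason the multiplier is useful later is also worth stating: combined with \cref{prop:vertical-curves}, where $F=0$ along the loops, the same $\lambda_*$ annihilates $F$ along the whole concatenation. \cref{lem:abnormal} then yields singularity of that concatenation.
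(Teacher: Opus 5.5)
Your proof is correct and follows the paper's argument essentially verbatim: on the leg $y=0$ both $V$ and $F$ vanish, on the leg $x\equiv x_*$ one has $\dot x=0$, and $\lambda_*\cdot w(x_*)=x_*(1-x_*)+(x_*-1)x_*=0$. Your explicit check that $\lambda_*\neq 0$, because $x_*$ and $x_*-1$ cannot both vanish, is a small detail the paper leaves implicit.
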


\begin{proof}
The horizontal segment lies on $y=0$, where $F=V=0$.  On the vertical
segment one has $x=x_*$ and $\dot x=0$, so it contributes no vertical
displacement.  Moreover, \eqref{eq:F-global} gives
\begin{equation}
 \lambda_*\cdot F(x_*,y)
 =y\bigl(y-a(x_*)\bigr)
 \bigl(x_*(1-x_*)+(x_*-1)x_*\bigr)=0.\qedhere
\end{equation}
\end{proof}

\begin{proposition}[Every point is an abnormal endpoint]
\label{prop:global-abnormal}
For every $q_*\in\R^4$ there exists a singular control
$\mathsf{u}\in L^2([0,1],\R^2)$ such that $\End_0(\mathsf{u})=q_*$.  
\end{proposition}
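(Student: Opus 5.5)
Write $q_*=(x_*,y_*,z_*)$ with $z_*\in\R^2$. The plan is to concatenate the closed base curves of \cref{prop:vertical-curves} with the broken line of \cref{lem:base-access}. Then we check that a single nonzero multiplier works for the whole concatenation, apply \cref{lem:abnormal}, and rescale to $[0,1]$.

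First, \cref{prop:vertical-curves} gives a finite concatenation of the loops $\sigma^0_\varepsilon$, $\sigma^1_\delta$ and their time reversals. Its horizontal lift joins $(0,0,0)$ to $(0,0,z_*)$ and satisfies $F=0$ almost everywhere. Each piece is parametrized on an interval of length $3$, so the whole concatenation lives on some $[0,L_1]$. Its control lies in $L^2$, since each $y_\varepsilon\in W^{1,2}$ by \cref{lem:binary}(ii) and the axis portions have constant control.

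Next, append the broken line $\omega_{x_*,y_*}$ on $[L_1,L_1+2]$. The base point is back at $(0,0)$ at time $L_1$, and the dynamics \eqref{eq:triangular-system} do not depend on $z$. Hence the continuation of the lift from $(0,0,z_*)$ is the lift of $\omega_{x_*,y_*}$ from the origin translated by $z_*$. By \cref{lem:base-access} this segment has zero vertical displacement, so the total curve on $[0,L]$, with $L=L_1+2$, ends at $q_*$.

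For singularity, take $\lambda_*=(x_*,x_*-1)$, which is nonzero because its two entries differ by $1$. On $[0,L_1]$ we have $F=0$, so $\lambda_*\cdot F=0$ trivially. On $[L_1,L]$ we have $\lambda_*\cdot F=0$ by \cref{lem:base-access}. Thus $\lambda_*\cdot F(x(t),y(t))=0$ for a.e. $t\in[0,L]$. \Cref{lem:abnormal} then shows that the control is critical for $\End_0^L$ and that its affine reparametrization $\mathsf u=R_L(u,v)$ is critical for $\End_0$, with $\End_0(\mathsf u)=\End_0^L(u,v)=q_*$.

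The only point needing care is that the multiplier from the second step is compatible with the first. This is exactly why \cref{prop:vertical-curves} gives the stronger statement $F=0$, rather than vanishing of $\lambda\cdot F$ for one particular $\lambda$. The remaining verifications are routine: concatenated $L^2$ controls remain $L^2$, and $z$-translation invariance lets the pieces be glued.
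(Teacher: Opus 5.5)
Your proof is correct and follows the paper's argument essentially step for step. You concatenate the $F$-null closed curves of \cref{prop:vertical-curves} with the lift of $\omega_{x_*,y_*}$, use the single multiplier $\lambda_*=(x_*,x_*-1)$ along the whole path, and conclude with \cref{lem:abnormal} after affine reparametrization; your extra checks (that $\lambda_*\neq0$ and that the concatenated control is in $L^2$) are correct details the paper states more briefly.
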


\begin{proof}
Write $q_*=(x_*,y_*,z_*)$.  \cref{prop:vertical-curves} gives
a finite concatenation of closed curves from $(0,0,0)$ to $(0,0,z_*)$
along which $F=0$.  Concatenate it with the horizontal lift of
$\omega_{x_*,y_*}$ from \eqref{eq:base-path}.  The vector fields are invariant
under translations in $z$, and \cref{lem:base-access} shows that
the second part has zero vertical displacement.  The endpoint is
therefore $q_*$.

The same multiplier $\lambda_*=(x_*,x_*-1)$ works along the entire
concatenation: on the closed-curve part $F=0$, and on the broken line
the conclusion follows from \cref{lem:base-access}.  The
concatenation has finitely many $W^{1,2}$ pieces and hence an $L^2$
control on a finite interval.  After affine reparametrization to
$[0,1]$, \cref{lem:abnormal} proves that this control is a critical
point of $\End_0$.
\end{proof}

\section{A compact singular family with open endpoint image}

We now use the two binary loops and restrict the endpoint in the base plane
to a bounded region.  Set
\begin{equation}
 U\coloneq(1,2)^2,
 \qquad
 \overline U=[1,2]^2.
\end{equation}
For $(x,y)\in\overline U$, let $\omega_{x,y}$ be the broken line defined
in \eqref{eq:base-path}.
By \cref{lem:base-access}, its horizontal lift has zero vertical
displacement and admits the multiplier $(x,x-1)$.

For $\varepsilon,\delta\in E = \{0,1\}^{\mathbb{N}}$ and
$(x,y)\in\overline U$, concatenate $\sigma^0_\varepsilon$,
$\sigma^1_\delta$, and $\omega_{x,y}$, using the fixed parametrizations
above.  The resulting base path is defined on $[0,8]$.  Let its horizontal
lift start at the origin, reparametrize it affinely to $[0,1]$, and denote
the resulting control by $\mathsf{u}_{\varepsilon,\delta,x,y}$.
\begin{proposition}[A compact family of singular controls]\label{prop:compact-family}

The set
\begin{equation}
 \mathcal C\coloneq \left\{
  \mathsf{u}_{\varepsilon,\delta,x,y}:
  \varepsilon,\delta\in E,\ (x,y)\in\overline U
 \right\}
 \subset L^2([0,1],\R^2)
\end{equation}
is compact and consists entirely of singular controls.  Moreover,
\begin{equation}\label{eq:local-endpoint}
 \End_0(\mathsf{u}_{\varepsilon,\delta,x,y})
 =\bigl(x,y,b(\varepsilon),b(\delta)\bigr).
\end{equation}
Consequently, $\End_0(\mathcal C)$ contains the nonempty open set
\begin{equation}
 \mathcal O\coloneq U\times(0,1)^2.
\end{equation}
\end{proposition}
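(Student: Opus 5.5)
The plan is to prove the three assertions (singularity, the endpoint formula, compactness) separately, with compactness coming from continuity of an explicit parametrization on a compact parameter space.

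\textbf{Endpoint formula.} Since the vector fields are invariant under $z$-translations, vertical displacements of concatenated pieces add. By \cref{prop:vertical-curves}, the lift of $\sigma^0_\varepsilon$ ends at $(0,0,b(\varepsilon)e_1)$. The lift of $\sigma^1_\delta$ then adds $b(\delta)e_2$ and returns the base point to the origin. By \cref{lem:base-access}, the lift of $\omega_{x,y}$ moves the base point to $(x,y)$ with zero vertical displacement. This gives \eqref{eq:local-endpoint}.

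\textbf{Singularity.} I use the multiplier $\lambda=(x,x-1)$, which is nonzero. It annihilates $F$ along the broken line by \cref{lem:base-access}, and along the two loops because there $F=0$ a.e. Then \cref{lem:abnormal}, applied on $[0,8]$ and followed by the affine reparametrization, shows that each $\mathsf u_{\varepsilon,\delta,x,y}$ is critical.

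\textbf{Open image.} By \cref{lem:binary}(iii), $b$ maps $E$ onto $[0,1]$. Hence $\End_0(\mathcal C)=\overline U\times[0,1]^2\supset\mathcal O$.

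\textbf{Compactness.} I show that the map $\Phi:E\times E\times\overline U\to L^2([0,1],\R^2)$, $(\varepsilon,\delta,x,y)\mapsto\mathsf u_{\varepsilon,\delta,x,y}$, is continuous. Since its domain is compact ($E$ is compact in the product topology), $\mathcal C=\Phi(E\times E\times\overline U)$ is then compact. The reparametrization $R_8$ is a linear isomorphism, so it suffices to work with controls on $[0,8]$, which are piecewise defined on the subintervals $[0,1],\dots,[7,8]$; I check continuity on each piece.
\begin{itemize}
\item The axis pieces have constant controls, independent of the parameters or affine in $(x,y)$.
\item On $[1,2]$, the control of $\sigma^0_\varepsilon$ is $(2r,\,2r\,y_\varepsilon'(-r+2r(t-1)))$. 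Its $L^2$ dependence on $\varepsilon$ is controlled by $\|y'_\varepsilon-y'_{\tilde\varepsilon}\|_{L^2(\R)}$ after an affine change of variables. That quantity is continuous in $\varepsilon$ by \cref{lem:binary}(ii).
\item The middle piece of $\sigma^1_\delta$ is handled the same way, with $y'_\delta$ in place of $y'_\varepsilon$.
\item The pieces of $\omega_{x,y}$ have controls $(x,0)$ and $(0,y)$, which are continuous in $(x,y)$.
\end{itemize}
The main point needing care is that the $L^2$ norm on $[0,8]$ is the sum of the norms on the pieces, so joint continuity follows from continuity on each piece.
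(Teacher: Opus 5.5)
Your proof is correct and follows the paper's own argument essentially step by step. The endpoint formula comes from \cref{prop:vertical-curves} and \cref{lem:base-access} via $z$-translation invariance, singularity comes from the common multiplier $(x,x-1)$ and \cref{lem:abnormal}, and compactness comes from $\mathcal C$ being the continuous image of $E\times E\times\overline U$, using \cref{lem:binary}\textup{(ii)} for the binary pieces. Your version is slightly more explicit, since it spells out the change of variables on the graph pieces and the exact identification $\End_0(\mathcal C)=\overline U\times[0,1]^2$. The only nitpick is that it is the squared $L^2$ norm on $[0,8]$, not the norm itself, that is additive over the pieces.
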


\begin{proof}
\cref{prop:vertical-curves} and \cref{lem:base-access}
give \eqref{eq:local-endpoint}.  Along the closed curves one has $F=0$,
and the multiplier $(x,x-1)$ works along $\omega_{x,y}$.  Hence it works
along the whole concatenation, and \cref{lem:abnormal} proves
singularity.

For compactness, all pieces use fixed parameter intervals.  By
\cref{lem:binary}\textup{(ii)}, the controls of the binary portions
depend continuously in $L^2$ on $\varepsilon$ and $\delta$.  The two
controls of the final broken line are $(x,0)$ and $(0,y)$, so they
depend continuously on $(x,y)$.  Therefore
\begin{equation}
 E\times E\times\overline U
 \longrightarrow L^2([0,1],\R^2),\qquad
 (\varepsilon,\delta,x,y)\longmapsto
 \mathsf{u}_{\varepsilon,\delta,x,y},
\end{equation}
is continuous.  Since the parameter space is compact, so is $\mathcal C$.

The surjectivity of $b$ from \cref{lem:binary}\textup{(iii)} and
\eqref{eq:local-endpoint} give
$\mathcal O\subset\End_0(\mathcal C)$.
\end{proof}

\begin{corollary}[A compact family in the horizontal path space]
\label{cor:compact-horizontal-family}
Let $\mathcal C$ be the compact set in
\cref{prop:compact-family}, and define
\begin{equation}\label{eq:compact-horizontal-family}
 \mathcal K\coloneq \{\gamma_\mathsf{u}:\mathsf{u}\in\mathcal C\}
 \subset\Omega_0.
\end{equation}
Then $\mathcal K$ is compact, consists of singular horizontal
curves, and satisfies
\begin{equation}\label{eq:K-endpoint-image}
 \mathcal E_0(\mathcal K)=\End_0(\mathcal C)\supset\mathcal O.
\end{equation}
Moreover, for any sub-Riemannian metric $g$ on $\Delta$, and denoting by
$\ell_g(\gamma)$ the length of a horizontal curve, it holds
\begin{equation}
 \sup_{\gamma\in\mathcal K}\ell_g(\gamma)<\infty.
\end{equation}
\end{corollary}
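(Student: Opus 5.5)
The plan is to transfer everything from the control space to the path space through the map $\mathsf u\mapsto\gamma_{\mathsf u}$. This map is the inverse of the global chart of $\Omega_0$ given by controls in the frame $(X,Y)$. It is therefore a homeomorphism, indeed a diffeomorphism, from $L^2([0,1],\R^2)$ onto $\Omega_0$.

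Compactness of $\mathcal K$ is then immediate: it is the image of the compact set $\mathcal C$ from \cref{prop:compact-family} under a continuous map. If one wants a direct check without invoking the chart, continuity of $\mathsf u\mapsto\gamma_{\mathsf u}$ into $W^{1,2}$ follows from the triangular system \eqref{eq:triangular-system}. The maps $(u,v)\mapsto(x,y)$ are bounded linear into $W^{1,2}$. The map $\mathsf u\mapsto z$ is continuous because $\dot z=-uV(x,y)$, $V$ is smooth, and $(x,y)$ converge uniformly, so $uV(x,y)$ converges in $L^2$.

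Singularity transfers because $\End_0=\mathcal E_0\circ(\mathsf u\mapsto\gamma_{\mathsf u})$ with a diffeomorphic chart. Hence $D\End_0$ at $\mathsf u$ and $D\mathcal E_0$ at $\gamma_{\mathsf u}$ have the same image. Every control in $\mathcal C$ is critical by \cref{prop:compact-family}, so every curve in $\mathcal K$ is singular. The identity $\mathcal E_0(\mathcal K)=\End_0(\mathcal C)$ holds by definition, and the inclusion of $\mathcal O$ is again \cref{prop:compact-family}.

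For the length bound, write $\dot\gamma=uX+vY$ and $\ell_g(\gamma)=\int_0^1|uX+vY|_g\,\mathrm dt$. On the compact set $\gamma([0,1])$ the metric satisfies $|uX+vY|_g\le C(|u|+|v|)$, with $C$ depending only on a compact region. The key step is that all curves of $\mathcal K$ lie in one compact subset $B\subset\R^4$. This holds because the evaluation map $[0,1]\times\mathcal K\to\R^4$ is continuous, or directly because $x,y$ are bounded by the fixed geometry and $|z|$ is bounded by $\|u\|_{L^1}\sup|V|$ on the bounded $(x,y)$-region. With $C_B:=\max_{B}$ of the $g$-norms of $X,Y$, Cauchy--Schwarz gives
\[
\ell_g(\gamma_{\mathsf u})\le C_B\,\|\mathsf u\|_{L^1}\le C_B\sqrt2\,\|\mathsf u\|_{L^2}.
\]
This is uniformly bounded, since the compact set $\mathcal C$ is bounded in $L^2$.

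The only mildly delicate point is the uniform confinement to a compact set $B$, which is needed because $g$ is arbitrary and not assumed comparable to a fixed metric globally. Continuity of evaluation on the compact set $[0,1]\times\mathcal K$ settles it.
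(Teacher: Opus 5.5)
Your proposal is correct and follows the paper's proof. Compactness and singularity transfer through the global control chart, and the length bound comes from $L^2$-boundedness of $\mathcal C$ together with confinement of all curves of $\mathcal K$ to a common compact set. The only differences are that you bound $|X|_g,|Y|_g$ directly on that set instead of comparing $g$ with the metric $g_0$ making $X,Y$ orthonormal, and that you justify the confinement (via continuity of evaluation on $[0,1]\times\mathcal K$), which the paper only asserts.
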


\begin{proof}
By the global control parametrization associated with $(X,Y)$, the map
$\mathsf{u}\mapsto\gamma_\mathsf{u}$ is continuous.  Hence
\eqref{eq:compact-horizontal-family} and the compactness of $\mathcal C$
show that $\mathcal K$ is compact.  Its elements are singular, and
\eqref{eq:K-endpoint-image} follows from
\cref{prop:compact-family}.

Finally, compactness of $\mathcal C$ implies boundedness in $L^2$.  Let
$g_0$ be the sub-Riemannian metric making $X,Y$ orthonormal.  Then
\begin{equation}
 \sup_{\gamma\in\mathcal K}\ell_{g_0}(\gamma)
 =\sup_{\mathsf{u}\in\mathcal C}\int_0^1|\mathsf{u}(t)|\dd t
 \leq\sup_{\mathsf{u}\in\mathcal C}\|\mathsf{u}\|_{L^2}<\infty.
\end{equation}
Since the images of the curves $\gamma_\mathsf{u}$, with $\mathsf{u}\in\mathcal C$, are
contained in a common compact set, $g$
and $g_0$ are equivalent there, and the statement follows.
\end{proof}

\begin{proof}[Proof of \cref{thm:main}]
Take $q_0=0$.  \cref{prop:bracket-generating} provides a smooth
rank-two distribution of step five.
\cref{prop:global-abnormal} proves assertion~\textup{(i)}, and
\cref{cor:compact-horizontal-family} proves assertion~\textup{(ii)}.
\end{proof}

\section{The minimizing Sard property}
\label{sec:minimizing}

The counterexample concerns all singular horizontal curves.  In contrast,
the same structure satisfies the minimizing Sard conjecture.

\begin{proposition}
\label{prop:minimizing-sard}
Let $\Delta$ be the sub-Riemannian distribution on $\R^4$ built above, and fix a sub-Riemannian metric $g$ on $\Delta$. For every $q_0\in\R^4$, the set of endpoints of singular length-minimizing horizontal curves issuing from $q_0$ has Lebesgue measure zero.
\end{proposition}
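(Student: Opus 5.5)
The plan is to deduce the statement directly from Rifford's theorem \cite{RiffordMinSard}, which establishes the minimizing Sard conjecture for every sub-Riemannian structure whose distribution has corank two. Our $\Delta$ has rank two in $\R^4$, hence corank two, and the argument reduces to checking that the hypotheses of that theorem hold for our structure.

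First, I recall the precise setting of \cite{RiffordMinSard}: a smooth, totally nonholonomic distribution of constant rank on a smooth connected manifold, equipped with a smooth sub-Riemannian metric, with the conclusion that for each base point the set of endpoints of singular minimizing geodesics has Lebesgue measure zero. I then verify the hypotheses one at a time. Smoothness of $\Delta$ holds because $a\in C_c^\infty(\R)$ by \cref{lem:binary}, so $V$ and $X,Y$ are smooth. The bracket-generating property at every point is \cref{prop:bracket-generating}. The rank is constant, since $X,Y$ form a global frame. Finally, $g$ is any smooth metric on $\Delta$.

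The one point needing care is completeness and localization, because Rifford's result may be stated on compact manifolds or for complete metrics. I would handle this by a local argument. Lebesgue-null is a local property, so I fix a ball $B$ around $q_0$. Minimizers with endpoints in a bounded set have bounded length, so they stay in a compact neighborhood $K$ of $q_0$. Modifying $g$ outside a larger neighborhood of $K$, without changing $\Delta$, yields a complete metric. This does not affect minimality or singularity of the curves in question, since singularity depends only on $\Delta$ and minimality depends only on $g$ near $K$ for short curves. Exhausting $\R^4$ by countably many balls then gives the global null set. No equiregularity or analyticity is needed, and this matters because our $\Delta$ is neither; I would check that the corank-two case in \cite{RiffordMinSard} indeed imposes no such assumption.

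The main obstacle is this verification that Rifford's corank-two theorem applies verbatim, that is, that it needs no genericity or regularity hypothesis beyond smooth, bracket-generating, and constant rank, together with the localization above. If some extra hypothesis appeared, my fallback would be a direct argument. Singular minimizers are $C^1$ by the known regularity for corank-two structures. The singular curves that the construction uses to reach generic points have corners, and here \cref{lem:abnormal} together with the characterization of multipliers would restrict singular minimizers to lie in the set $\{\lambda\cdot F=0\}$ with smooth projection. This route is far more delicate, so I would rely on the citation.
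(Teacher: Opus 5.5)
Your proposal follows the paper's proof: the paper likewise reduces to a complete metric by a countable exhaustion and then applies Rifford's result. The hypothesis you flag for checking is that corank-two distributions are pre-medium fat almost everywhere, so \cite[Corollary~1.3]{RiffordMinSard} applies with no equiregularity or analyticity assumption.

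One correction to your localization. If $g$ is incomplete, bounded length does not confine minimizers to a compact set. Instead, exhaust by compact sets $K_n$ containing the images of the curves, and replace $g$ by a complete metric $g_n\geq g$ that agrees with $g$ on $K_n$. Then every $g$-minimizer contained in $K_n$ remains $g_n$-minimizing; this is not guaranteed if the modification is allowed to decrease $g$.
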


\begin{proof}
Let $d_g$ denote the sub-Riemannian distance associated with $g$. We can assume without loss of generality that $d_g$ is complete; otherwise a standard countable exhaustion argument implies the result as well. The distribution $\Delta$ has rank two on the four-dimensional manifold
$\R^4$, hence corank two. Corank two distributions are pre-medium fat almost everywhere in the sense of \cite{RiffordMinSard}, and the statement thus follows from \cite[Corollary~1.3]{RiffordMinSard}.
\end{proof}

We next record a direct geometric feature of our construction.  
\begin{proposition}
For every
$q_*=(x_*, y_*, z_*)\in\R^4$ with $x_*y_*\neq 0$, each of the curves produced by the
construction in the proof of \cref{prop:global-abnormal} and
ending at $q_*$ contains a corner, and hence is not length minimizing for
any smooth sub-Riemannian metric on $\Delta$.
\end{proposition}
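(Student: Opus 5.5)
The corner will be found at the junction of the two segments of the broken line $\omega_{x_*,y_*}$, at the point $(x_*,0)$. The lift of the first segment moves with control $(x_*,0)$, so its velocity is $x_*X$. The lift of the second segment moves with control $(0,y_*)$, so its velocity is $y_*Y$. At the junction the horizontal lift therefore has one-sided velocities $x_*X(p)$ and $y_*Y(p)$, where $p=(x_*,0,z_*)$. Since $x_*y_*\neq0$ and $X(p),Y(p)$ are linearly independent, these are nonzero and not positively proportional. This holds in any affine reparametrization to $[0,1]$, because scaling time multiplies both velocities by the same positive constant. So the curve has a genuine corner: the left and right tangent directions at $p$ differ.

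Next I check that this corner persists whatever closed-curve concatenation from \cref{prop:vertical-curves} precedes $\omega_{x_*,y_*}$. It does, because the broken line always comes last and is the same for every such curve. The only subtlety is degenerate cases where a segment has zero length; the hypothesis $x_*\neq0$, $y_*\neq0$ rules these out. I would also note that the corner lies in the interior of the parameter interval.

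Then I invoke the classical fact that length minimizers of a sub-Riemannian structure have no corners. This is the result of Hakavuori--Le Donne: a geodesic whose left and right tangent directions at some point are distinct cannot be length minimizing, for any smooth sub-Riemannian metric on $\Delta$. I would quote it as an external theorem, since the paper has no internal result covering it. Its hypotheses need to be checked: the distribution is smooth and bracket-generating by \cref{prop:bracket-generating}, and the curve is Lipschitz with one-sided derivatives at the corner time. The corner notion there is usually stated for curves parametrized by arclength, so the curve would first be reparametrized by $g$-arclength. Along each of the two segments the speed is nonzero and constant up to smooth variation of $g$, so after this reparametrization the one-sided tangents become the normalized vectors $x_*X(p)/|x_*X(p)|_g$ and $y_*Y(p)/|y_*Y(p)|_g$. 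These are still distinct.

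The main obstacle is this application of the no-corner theorem, in particular making sure the hypotheses match: one-sided derivatives at the corner time, and a non-equiregular distribution. If the cited theorem requires only smoothness and bracket generation, the argument goes through. Otherwise the fallback is a local argument near $p$. There $a$ may be nonzero at $x_*$ but is constant in $y$ near the corner, so I would construct an explicit horizontal shortcut: cut the corner in the $(x,y)$-plane and correct the resulting $z$-error with small loops. Cutting the corner by $\epsilon$ saves length of order $\epsilon$, while, since $V(x,0)=0$ and $F$ vanishes to first order on $y=0$, the $z$-error is of order $\epsilon^3$ and can be repaired at cost $o(\epsilon)$. This is essentially the Hakavuori--Le Donne mechanism, so citing the theorem should suffice.
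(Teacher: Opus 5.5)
Your main argument matches the paper's proof: the corner is at the junction of the two segments of the lift of $\omega_{x_*,y_*}$, where the one-sided velocities are the independent vectors $x_*X(p)$ and $y_*Y(p)$. The preceding closed curves do not touch this junction, and the Hakavuori--Le Donne theorem concludes, since it holds in any smooth bracket-generating sub-Riemannian manifold, equiregular or not, which is how the paper uses it.

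Drop the fallback, however, because its estimate is false. If $a(x_*)\neq0$, then $[Y,X](p)=0$ but $[Y,[Y,X]](p)=a(x_*)w(x_*)\cdot\partial_z\neq0$. By ball--box, an error of size $\epsilon^3$ in the direction $w(x_*)$ therefore costs length of order $\epsilon$, the same order as the saving, not $o(\epsilon)$. By homogeneity the same balance occurs when $a(x_*)=0$, which is exactly why the no-corner theorem needs a genuinely finer argument.
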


\begin{proof}
 Fix $q_*=(x_*,y_*,z_*)$ with $x_*y_*\neq 0$.  The last
part of every curve constructed in the proof of
\cref{prop:global-abnormal} is the horizontal lift of
$\omega_{x_*,y_*}$ defined in \eqref{eq:base-path}.
Since $x_*y_*\neq0$, both segments are nonconstant.  At their junction,
the two one-sided horizontal directions are spanned by $X$ and $Y$,
respectively, and are therefore linearly independent.  Thus the lifted
curve has a corner.  The preliminary closed curves used to adjust the
$z$-coordinate do not alter this junction.  By the no-corner theorem of
Hakavuori and Le Donne \cite{HLD}, such curves cannot be length minimizing
for any smooth sub-Riemannian metric on $\Delta$.
\end{proof}

The two conclusions are complementary.  \cref{prop:minimizing-sard}
shows that, for every fixed base point, the endpoints of all singular
minimizing curves form a null set.  The corner argument
shows directly that the particular curves used to prove
$\Abn_\Delta(0)=\R^4$ are non-minimizing for almost every endpoint.

\subsection*{Acknowledgements}
This project has received funding from the European Research Council
(ERC) under the European Union's Horizon 2020 research and innovation
programme (grant agreement GEOSUB, No.~945655). The authors
acknowledge the support of INdAM and of the IRP project GEOSUBMAN by
INSMI-CNRS. D.T. has also
received funding from the
European Union -- NextGenerationEU and the University of Padua under the 2023
STARS@UNIPD  Starting Grant Project \textit{New Directions in Fractional
Calculus -- NewFrac} (grant agreement No.\ CUP\_C95\-F21\-009\-990\-001), and from the INdAM--GNAMPA Project 2026 \textit{Metodi
non locali classici e distribuzionali per problemi variazionali} (grant agreement No. \ CUP\_E53\-C25\-002\-010\-001).

\subsection*{AI-assisted tools disclosure.}
In accordance with the Leiden Declaration on AI
and Mathematics \cite{LeidenDeclaration}, we disclose the role of
artificial intelligence in the preparation of this work.  The
conceptual idea underlying the paper -- namely, to realize, or
``endpointify'', Kupka's counterexample within a sub-Riemannian endpoint
map -- was conceived by the authors.  It grew out of our earlier work on
Sard properties for polynomial maps in infinite dimension \cite{LRT}
and, in particular, our study there of Kupka's classical construction.

The mathematical implementation of this idea was AI-generated using
OpenAI's ChatGPT 5.6 Sol.  The authors take full and
exclusive responsibility for the correctness of the results and the
contents of the paper.

\bibliographystyle{abbrv}
\bibliography{no_sard_references}

\end{document}